\setlist[enumerate]{leftmargin=25pt}
\setlist[itemize]{leftmargin=25pt}
\newtheorem{thm}{Theorem}[section]
\newtheorem{lemma}[thm]{Lemma}
\newtheorem{prop}[thm]{Proposition}
\newtheorem{conj}[thm]{Conjecture}
\newtheorem{ques}{Question}
\theoremstyle{definition}
\theoremstyle{definition}
\newtheorem{defn}[thm]{Definition}
\newcommand{\supp}{\ensuremath{\operatorname{supp}}}
\newcommand{\Tr}{\ensuremath{\operatorname{Tr}}}
\newcommand{\bra}[1]{{\left({#1}\right)}}
\newcommand{\scal}[1]{{\left\langle{#1}\right\rangle}}
\newcommand{\set}[1]{{\left\{{#1}\right\}}}
\DeclarePairedDelimiter\abs{\lvert}{\rvert}
\newcommand{\de}{\ensuremath{\delta}}
\newcommand{\ze}{\ensuremath{\zeta}}
\newcommand{\La}{\ensuremath{\Lambda}}
\newcommand{\la}{\ensuremath{\lambda}}
\newcommand{\sig}{\ensuremath{\sigma}}
\newcommand{\om}{\ensuremath{\omega}}
\newcommand{\Om}{\ensuremath{\Omega}}
\newcommand{\ZZ}{\ensuremath{\mathbb{Z}}}
\newcommand{\FF}{\ensuremath{\mathbb{F}}}
\newcommand{\QQ}{\ensuremath{\mathbb{Q}}}
\newcommand{\RR}{\ensuremath{\mathbb{R}}}
\newcommand{\wt}[1]{\ensuremath{\widetilde{#1}}}
\newcommand{\wh}[1]{\ensuremath{\widehat{#1}}}
\newcommand{\CC}{\ensuremath{\mathbb{C}}}
\newcommand{\Gal}{\ensuremath{\textnormal{Gal}}}
\newcommand{\sm}{\ensuremath{\setminus}}
\newcommand{\ssq}{\ensuremath{\subseteq}}
\newcommand{\vn}{\ensuremath{\varnothing}}
\newcommand{\ol}{\ensuremath{\overline}}
\newcommand{\bs}[1]{\ensuremath{\mathbf{#1}}}
\newcommand{\out}[1]{}
\definecolor{redi}{RGB}{255,38,0}
\definecolor{redii}{RGB}{200,50,30}
\definecolor{yellowi}{RGB}{255,251,0}
\definecolor{bluei}{RGB}{0,150,255}
\definecolor{blueii}{RGB}{135,247,210}
\definecolor{blueiii}{RGB}{91,205,250}
\definecolor{blueiv}{RGB}{115,244,253}
\definecolor{bluev}{RGB}{1,58,215}
\definecolor{orangei}{RGB}{240,143,50}
\definecolor{yellowii}{RGB}{222,247,100}
\definecolor{greeni}{RGB}{166,247,166}
\tikzset{ 
table/.style={
  matrix of nodes,
  row sep=-\pgflinewidth,
  column sep=-\pgflinewidth,
  nodes={rectangle,draw=black,text width=1.25ex,align=center},
  text depth=0.25ex,
  text height=1ex,
  nodes in empty cells
  },
texto/.style={font=\footnotesize\sffamily},
title/.style={font=\small\sffamily}
}
\numberwithin{equation}{section}
\title[A linear programming approach to Fuglede's conjecture in $\ZZ_p^3$]{A linear programming approach\\ to Fuglede's conjecture in $\ZZ_p^3$}
\subjclass[2020]{43A46, 90C05, 52C22, 11L03, 20K01, 05B45}
\keywords{Fuglede's conjecture, spectral sets, tiling, blocking sets, finite fields, projective plane.}
\author{Romanos Diogenes Malikiosis} 
\address{Aristotle University of Thessaloniki, Department of Mathematics, 541 24 Thessaloniki, Greece}
\email{romanos@math.auth.gr}
\begin{document}
\begin{abstract}
We present an approach to Fuglede's conjecture in $\ZZ_p^3$ using linear programming bounds, obtaining the following partial result: if $A\ssq\ZZ_p^3$ with $p^2-p\sqrt{p}+\sqrt{p}<\abs{A}<p^2$, then $A$ is not spectral.
\end{abstract}
\maketitle

\bigskip\bigskip
\section{Introduction}\label{sec:intro}
\bigskip\bigskip

A fundamental question in harmonic and functional analysis predicts that a geometric property of a measurable set (\emph{tiling}) coincides with an analytic one (\emph{spectrality}). This was posed initially by Fuglede \cite{Fuglede} in the setting of finite dimensional Euclidean spaces, following a question of Segal in the commutativity of certain partial differential operators. 
\begin{conj}\label{conj:Fuglede}
Let $\Om\ssq\RR^d$ be a bounded measurable set with $m(\Om)>0$. Then, $\Om$ tiles $\RR^d$ by translations (i.~e. almost every $x\in\RR^d$ can be written uniquely as $\om+t$, where $\om\in\Om$ and $t$ belongs to a fixed set of translations $T$) if and only if $L^2(\Om)$ accepts an orthogonal basis of complex exponential functions, $e^{2\pi i\scal{\la,x}}$, for $\la\in\La$ ($\La$ is called the spectrum of $\Om$).
\end{conj}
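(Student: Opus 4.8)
The plan is to funnel both properties through the Fourier transform of the indicator function, $\wh{\one_\Om}(\xi)=\int_\Om e^{-2\pi i\scal{\xi,x}}\,\di x$, and to compare them on its zero set $Z=\set{\xi\in\RR^d:\wh{\one_\Om}(\xi)=0}$. On the spectral side, the inner product of two exponentials restricted to $\Om$ is $\scal{e_\la,e_{\la'}}_{L^2(\Om)}=\wh{\one_\Om}(\la-\la')$, so a family $\set{e_\la:\la\in\La}$ is orthogonal exactly when $\La-\La\ssq\set{0}\cup Z$; completeness of the family (so that it is a \emph{basis}) is equivalent, by Parseval applied to the functions $\one_\Om e_\xi$, to the frequency-space identity $\sum_{\la\in\La}\abs{\wh{\one_\Om}(\xi-\la)}^2=m(\Om)^2$ for almost every $\xi$. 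On the tiling side, $\Om\oplus T=\RR^d$ says $\one_\Om\ast\delta_T=1$ a.e., which after taking transforms forces $\wh{\one_\Om}$ to vanish on $\supp\wh{\delta_T}\sm\set{0}$ together with a density normalization at the origin. Thus spectrality is a tiling of frequency space by the power spectrum $\abs{\wh{\one_\Om}}^2$, while tiling is a tiling of physical space by $\one_\Om$; the conjecture asserts that these two packing problems can always be solved simultaneously.

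First I would dispose of the \emph{lattice} case, which I would treat as the model. If $\Om$ tiles by a full-rank lattice $L$, then Poisson summation applied to $\one_\Om$ shows that $\wh{\one_\Om}$ vanishes on $L^{*}\sm\set{0}$ and that $\set{e_\la:\la\in L^{*}}$ is a complete orthogonal system; hence the dual lattice $L^{*}$ is a spectrum. Running the same computation backwards, a lattice spectrum $\La$ forces $\La^{*}$ to be a tiling set. This yields the equivalence cleanly whenever either the tiling set or the spectrum is a lattice, and it is the only regime in which I can see a genuinely elementary argument.

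The hard part is everything outside the lattice case: \emph{aperiodic}, or merely periodic-but-non-lattice, tilings and spectra. Here the dictionary no longer collapses to Poisson summation, and one would have to control the transcendental hypersurface $Z$ for an \emph{arbitrary} bounded measurable $\Om$, about which essentially nothing can be said in general. I expect this to be not just the main obstacle but an insurmountable one: the frequency-space and physical-space packing problems are genuinely different, and forcing their solutions to coincide for every $\Om$ is too strong a demand. Indeed I would anticipate --- and this is borne out in the literature --- that the statement as written is \emph{false} once $d\geq 3$, in both directions. This is precisely why the route I would actually pursue, and the one this paper takes, is to replace $\RR^d$ by a finite abelian group such as $\ZZ_p^3$: there $Z$ becomes a finite combinatorial object, the Fourier transform a finite character sum, and the two packing conditions can be attacked by concrete tools --- character sums, linear programming bounds, and blocking-set estimates --- that have no analogue over $\RR^d$.
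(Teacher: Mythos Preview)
The statement you are addressing is a \emph{conjecture}, not a theorem: the paper does not prove it, and in fact explicitly records (immediately after stating it) that it has been disproved in both directions for $d\geq 3$. So there is no ``paper's own proof'' to compare against, and your proposal is not really a proof attempt either --- it is an exposition of the Fourier-analytic dictionary between tiling and spectrality, a correct treatment of the lattice case (which is indeed Fuglede's original theorem), and an acknowledgement that the general statement is false. On its own terms that exposition is accurate and well written; it just is not, and cannot be, a proof of the conjecture as stated.

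If the intent was to supply a proof for this particular labelled statement, the gap is structural rather than technical: no argument can close it, because the claim is false for $d\geq 3$, and remains open for $d=1,2$. Your own final paragraph already says this. What you have produced is good motivational material for why one passes to $\ZZ_p^3$, but it should not be labelled or submitted as a proof of Conjecture~\ref{conj:Fuglede}.
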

While Conjecture \ref{conj:Fuglede} has been largely disproved, namely for $d\geq3$ \cite{FMM06, KMhadamard, TaoFuglede}, there is a great recent interest in the setting of finite Abelian groups. Besides, even Fuglede himself hinted at different settings for Conjecture \ref{conj:Fuglede}.

Let $G$ be a finite Abelian group. We say that a subset $A\ssq G$ tiles $G$ by translations (or just that $A$ is a \emph{tile}) if there is a subset $T\ssq G$ such that each element of $G$ can be expressed uniquely as $a+t$, with $a\in A$, $t\in T$. This fact will be denoted as $A\oplus T=G$, and $T$ is called the tiling complement of $A$. In general, we will denote by $A+T$ the sumset $\set{a+t:a\in A, t\in T}$; if every element of $A+T$ has a unique representation as a sum of an element of $A$ and an element of $T$, we emphasize this by writing $A\oplus T$.

We may also define the following inner product on the space of complex functions defined on $A\ssq G$:
\[\scal{f,g}_A=\sum_{x\in A}f(x)\ol{g(x)}.\]
$\wh{G}$ denotes as usual the dual group of $G$, i.~e. the group of multiplicative characters $\chi:G\to S^1$. We call $B\ssq\wh{G}$ a \emph{spectrum} of $A$ if $\abs{A}=\abs{B}$ and $\scal{\chi,\psi}_A=0$, $\forall\chi\neq\psi\in B$; in other words, the characters of $B$, when restricted to $A$, form an orthogonal basis in $\CC^A$. In this case, $A$ is called \emph{spectral} and $(A,B)$ a spectral pair.

There is a connection between the discrete and continuous setting of Fuglede's conjecture, as was first suggested by \L{}aba \cite{Laba}, who connected this question to the tiling results by Coven \& Meyerowitz in $\ZZ$ \cite{CM}, and Tao, in his disproof of Fuglede's conjecture in $\RR^d$, $d\geq5$ \cite{TaoFuglede}. The principal argument in Tao's paper is the following:
\begin{center}
\emph{a counterexample in $G$ with $d$ generators can be lifted to a counterexample in $\RR^d$}
\end{center}
and this holds for both directions, namely the spectral to tiling and tiling to spectral directions, denoted henceforth as \textbf{S-T} and \textbf{T-S}, respectively. Tao found a spectral subset in $\ZZ_3^5$ which has 6 elements \cite{TaoFuglede}; such a subset cannot be a tile, as the cardinality of a tile divides the order of the group.

Counterexamples in $\ZZ_8^3$ \cite{KMhadamard} and $\ZZ_{24}^3$ \cite{FMM06} were given shortly thereafter, which led to counterexamples in $\RR^3$ in both directions; since \textbf{S-T} and \textbf{T-S} are hereditary properties, these counterexamples show that Fuglede's conjecture fails in $\RR^d$, $d\geq3$, in both directions.

The original conjecture is still open in $\RR$ and $\RR^2$. The counterexamples mentioned above, raised naturally the following question\footnote{This question is usually referred to as the \emph{discrete Fuglede conjecture}.}: 
\begin{ques}\label{ques:discreteFC}
For which finite Abelian groups $G$ do the properties of tiling and spectrality coincide?
\end{ques}

In the last few years, there is a wealth of results on Fuglede's conjecture in the discrete setting (there are of course the notable results \cite{Qpfuglede2, Qpfuglede, LM19} where this conjecture has been confirmed in $\QQ_p$ and for all convex domains in $\RR^d$). So far, all the results are positive in cyclic groups \cite{KMSV22, LL22a, LL22b, LL22c, M22, Zhang22} or groups with two generators \cite{FKS22, Zhang21}. A counterexample in a group of the form $\ZZ_N^2$ would also be lifted to a counterexample in $\RR^2$, whereas the confirmation of Fuglede's conjecture in every cyclic group would be an enormous step to confirming this conjecture in $\RR$ \cite{DL14} (the only missing ingredient would then be the \emph{rationality of spectrum} \cite{BoseMadan, Laba2intervals}).

For groups with at least three generators, it is natural to search for counterexamples, as we know that Fuglede's conjecture fails in $\RR^3$. Using the fundamental theorem on the factorization of finite Abelian groups, a counterexample in $\ZZ_p^d$ for all primes $p$ can be lifted to a counterexample to a finite Abelian group of $\geq d$ generators. This endeavor has so far been very successful in the \textbf{S-T} direction \cite{FS2020}:

\begin{thm}
\textbf{S-T} fails in $\ZZ_p^4$ for every odd prime $p$, and also in $\ZZ_2^{10}$.
\end{thm}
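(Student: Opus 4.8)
The plan is to exhibit, for each odd prime $p$, a set $A\ssq\ZZ_p^4$ that is spectral but is not a tile; since \textbf{S-T} is hereditary, a single such counterexample witnesses its failure. The non-tiling comes for free from a cardinality obstruction: if $A\oplus T=\ZZ_p^4$ then $\abs{A}\cdot\abs{T}=p^4$, so $\abs{A}$ must be a power of $p$ (the same divisibility remark used for Tao's $\ZZ_3^5$ example). Thus it suffices to produce a spectral $A$ whose cardinality is \emph{not} a power of $p$. One more elementary fact pins down the right target size: writing $\om=e^{2\pi i/p}$ and $\wh{\bs1_A}(\xi)=\sum_{a\in A}\om^{\scal{a,\xi}}$, any nonzero $\xi$ with $\wh{\bs1_A}(\xi)=0$ gives a vanishing sum of $p$-th roots of unity, which for prime $p$ forces each value $\scal{a,\xi}$, $a\in A$, to be attained equally often; hence $p\mid\abs{A}$. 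A spectrum of size $>1$ produces such an $\xi$ (take $\xi=b-b'$ for distinct $b,b'$ in the spectrum), so every spectral set satisfies $p\mid\abs{A}$. For odd $p$ the smallest multiple of $p$ that is not a power of $p$ is $2p$, so I aim for $\abs{A}=2p$.

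Next I would recast spectrality as a Hadamard-matrix condition. A pair $(A,B)$ with $\abs{A}=\abs{B}=m$ is spectral exactly when the $m\times m$ matrix $(\om^{\scal{a,b}})_{a\in A,\,b\in B}$ is a complex Hadamard matrix with $p$-th root entries (a Butson--Hadamard matrix $BH(m,p)$); equivalently, $(B-B)\sm\set{0}\ssq Z(A)$, where $Z(A)=\set{\xi:\wh{\bs1_A}(\xi)=0}$. The constraint that $A,B$ occupy only four coordinates becomes a rank bound: collecting the elements of $A$ and $B$ as the columns of $4\times m$ matrices $M_A,M_B$ over $\ZZ_p$, the exponent matrix $E=(\scal{a,b})$ factors as $E=M_A^{\top}M_B$ and therefore has $\ZZ_p$-rank at most $4$. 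The whole problem thus reduces to a clean algebraic target: \emph{construct a $2p\times 2p$ Butson--Hadamard matrix $BH(2p,p)$ whose exponent matrix has rank at most $4$ over $\ZZ_p$}; its rows and columns then read off the desired $A$ and $B$.

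Carrying out this construction is the main obstacle, and it is where all the work lies. The naive attempt to take $A$ a disjoint union of two affine lines fails for odd $p$: a short Fourier computation shows that on the directions orthogonal to both lines the transform equals $p(\om^{s}+\om^{t})$, which vanishes only if $\om^{s-t}=-1$, impossible for odd $p$; consequently $Z(A)$ is too thin to contain the difference set of any $2p$-element $B$. One therefore needs genuine two-dimensional mixing in each $\ZZ_p^2$ factor of $\ZZ_p^4=\ZZ_p^2\times\ZZ_p^2$: for instance, starting from a known $BH(2p,p)$ (for $p=3$ this is the classical $6\times6$ matrix underlying Tao's set) and exhibiting an explicit rank-$4$ factorization of its exponent matrix, or building $A$ as a controlled perturbation of two complementary cosets whose transforms have large, complementary zero sets and placing $B$ inside their common zero set. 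The delicate points are to keep the cardinality a non-power of $p$, to hold the ambient dimension at exactly $4$, and to verify orthogonality for all $\binom{2p}{2}$ row pairs \emph{uniformly in $p$}. Once such a spectral pair is produced, the cardinality obstruction from the first step immediately shows $A$ is not a tile, so \textbf{S-T} fails in $\ZZ_p^4$. The case $p=2$ is handled by the same scheme but forces different parameters: there $2p=4$ is already a power of $2$, so one must use a real Hadamard matrix whose order is a multiple of $4$ yet not a power of $2$ (the smallest being $12$), and realizing such a matrix with $\FF_2$-exponent-rank at most $10$ yields the counterexample in $\ZZ_2^{10}$.
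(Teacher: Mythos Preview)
The paper does not prove this theorem at all; it is merely quoted from \cite{FS2020} (with the odd-prime case also attributed to \cite{Mattheus} in the surrounding summary). So there is no ``paper's own proof'' to compare against, and what you have written must stand or fall on its own.

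As it stands, your proposal is a correct \emph{reduction} but not a proof. The framework is sound: tiles of $\ZZ_p^4$ have prime-power size, nontrivial spectral sets have size divisible by $p$, and a spectral pair $(A,B)$ with $\abs{A}=\abs{B}=2p$ is equivalent to a $BH(2p,p)$ whose exponent matrix has $\ZZ_p$-rank at most $4$. But you explicitly say ``carrying out this construction is the main obstacle, and it is where all the work lies'' --- and then you do not carry it out. Nothing in the proposal exhibits, for a general odd prime $p$, a $2p\times 2p$ exponent matrix of rank $\leq 4$ whose entrywise $\om$-exponential has orthogonal rows; the suggestions (``starting from a known $BH(2p,p)$'', ``a controlled perturbation of two complementary cosets'') are programmatic, not constructions. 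The remark that Tao's $6\times 6$ example handles $p=3$ also needs the nontrivial check that its exponent matrix has $\ZZ_3$-rank $\leq 4$ (Tao's original set lives in $\ZZ_3^5$). Likewise for $p=2$ you assert that a real Hadamard matrix of order $12$ can be realized with $\FF_2$-exponent-rank $\leq 10$, but you do not verify this.

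In short: the strategy is the right one, and it is essentially the strategy the cited references follow, but the proof is absent because the actual spectral sets are never produced. To complete it you must give, uniformly in $p$, an explicit rank-$4$ factorization $E=M_A^{\top}M_B$ over $\ZZ_p$ with $(\om^{E_{ij}})$ Hadamard; the literature does this via concrete algebraic or finite-geometric constructions (e.g.\ hyperbolic quadrics in the projective $3$-space over $\FF_p$), and that is the missing ingredient here.
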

This implies that \textbf{S-T} fails in every Abelian group with $\geq10$ generators, as well as every group with $\geq4$ generators having odd order.
For the \textbf{T-S} direction there no other counterexamples known besides $\ZZ_{24}^3$ (and every group that contains a copy thereof), which led to the following conjecture \cite{Shi19a}:

\begin{conj}
\textbf{T-S} holds in every finite Abelian $p$-group.
\end{conj}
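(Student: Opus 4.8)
The plan is to prove the statement by induction on $\abs G$, after translating both properties into the dual group. Writing $\widehat{\mathbf 1_A}(\xi)=\sum_{a\in A}\xi(a)$ and $\mathcal Z_A=\set{\xi\in\wh G:\widehat{\mathbf 1_A}(\xi)=0}$, spectrality of $A$ is the existence of $B\ssq\wh G$ with $\abs B=\abs A$ and $\chi\psi^{-1}\in\mathcal Z_A$ for all distinct $\chi,\psi\in B$ — i.e. a clique of size $\abs A$ in the Cayley graph on $\wh G$ with connection set $\mathcal Z_A$ — while $A\oplus T=G$ is equivalent to $\abs A\abs T=\abs G$ together with $\wh G\sm\set 1\ssq\mathcal Z_A\cup\mathcal Z_T$. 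The base case is the cyclic $p$-group $\ZZ_{p^n}$: here $\abs A$ is a power of $p$, so a tile has a single prime in its cardinality, the Coven--Meyerowitz conditions $(T1)$ and $(T2)$ both hold for any tile, and \L{}aba's theorem turns $(T1)+(T2)$ into spectrality. Thus \textbf{T-S} is already known for cyclic factors, and the work is to propagate it to groups of higher rank.

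For the inductive step I would look for a proper subgroup $H\le G$ adapted to a fixed tiling $A\oplus T=G$ and peel it off. The key mechanism is that if one factor, say $T$, is $H$-periodic ($T=T+H$), then $\widehat{\mathbf 1_T}$ is supported on the annihilator $H^\perp\cong\widehat{G/H}$, so the tiling condition collapses to $H^\perp\sm\set 1\ssq\mathcal Z_A$; equivalently $A$ meets every coset of $H$ in exactly $\abs A/[G:H]$ points. One then pushes $A$ down to a tile of the smaller group $G/H$ (or restricts to a coset), invokes the inductive hypothesis to obtain a spectrum there, and lifts it back through a transversal of $H^\perp$ inside $\wh G$, checking that the clique condition is preserved under the extension; dually, if $A$ itself is $H$-periodic the reduction is even cleaner. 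The structural input that makes this loop run is a factorization theorem for $p$-groups guaranteeing, in favourable cases, that some factor of a tiling is periodic.

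The genuine obstacle is the \emph{primitive} (non-periodic) case, and it is concentrated in the elementary abelian groups $\ZZ_p^d$ of large rank. Here the induction via periodicity can fail outright, because $\ZZ_p^d$ is not a Hajós group once $d$ is large: tilings need not possess any periodic factor, so there may be no subgroup $H$ to quotient by. Moreover — and this is why no soft duality argument can rescue the situation — the reverse direction \textbf{S-T} is already \emph{false} in $\ZZ_p^4$ by the theorem quoted above, so spectral and tiling structures are not interchangeable and the clique we seek cannot be produced merely by transposing the covering $\mathcal Z_A\cup\mathcal Z_T$. What one must instead exploit is the rigidity of vanishing sums of $p$-th roots of unity: every relation $\sum_{a}\xi(a)=0$ in $\ZZ[\zeta_p]$ decomposes into the basic relations $1+\zeta_p+\dots+\zeta_p^{p-1}=0$, which forces, level by level in the order filtration of $G$, that $A$ is assembled from cosets of certain subgroups. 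The program is to convert this coset rigidity into an explicit clique of the required size $\abs A$ in $\mathrm{Cay}(\wh G,\mathcal Z_A)$, built from spectra of the successive quotients. Carrying this through for non-periodic tiles in $\ZZ_p^d$ — quantifying how large a clique the zero set $\mathcal Z_A$ of an arbitrary $p$-group tile must contain — is exactly the open core of the conjecture and is where I expect essentially all the difficulty to lie; the Cayley-graph and linear-programming viewpoint of the present paper is precisely the quantitative framework one would hope to extend to this clique existence problem.
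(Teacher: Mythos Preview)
The statement you are addressing is a \emph{conjecture} in the paper, not a theorem; the paper offers no proof of it. The only \textbf{T-S} result the paper actually establishes is the special case $G=\ZZ_p^3$, by a short direct argument: a nontrivial tile has $p$ or $p^2$ elements, and one exhibits a line (respectively a plane) through the origin as spectrum. There is therefore no ``paper's proof'' of the general $p$-group conjecture to compare your attempt against.

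Your proposal is not a proof either, and you say as much. The inductive scheme you outline --- reduce via a subgroup $H$ when one tiling factor is $H$-periodic, apply the hypothesis to the quotient, lift the spectrum back --- is sound where it applies, and the cyclic base case $\ZZ_{p^n}$ via Coven--Meyerowitz and \L{}aba is correct. But the entire weight of the conjecture sits in the case your scheme cannot reach: non-periodic factorizations of $\ZZ_p^d$, which exist already for modest $d$ since these groups are not Haj\'os groups. Your final paragraph names this gap rather than closing it. The plan to ``convert coset rigidity into an explicit clique of size $\abs{A}$'' is a restatement of what must be proved, not a mechanism for proving it: the structure of vanishing $p$-th-root sums constrains $\mathcal Z_A$, but nothing in your sketch produces a set $B\ssq\wh G$ of size $\abs{A}$ with all nontrivial quotients lying in $\mathcal Z_A$. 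As written, this is a reasonable survey of where the difficulty lies, not a proof.
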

Summarizing, the state of the art regarding groups of the form $\ZZ_p^d$ is the following:
\begin{enumerate}
    \item Fuglede's conjecture is true if $d\leq2$ \cite{IMP15}.
    \item For $d\geq4$ and $p$ odd prime, \textbf{S-T} fails \cite{FS2020, Mattheus}.
    \item \textbf{S-T} fails in $\ZZ_2^{10}$ \cite{FS2020}.
    \item Fuglede's conjecture holds in $\ZZ_2^6$ \cite{FS2020}.
    \item \textbf{T-S} holds in $\ZZ_p^3$ \cite{Atenetal}.
    \item \textbf{S-T} holds in $\ZZ_p^3$, and $p\leq7$ \cite{FMV19}.
\end{enumerate}
For $p=2$, the question is still open for $\ZZ_2^d$, $d= 7, 8, 9$, and the \textbf{T-S} direction for all $d\geq7$, whereas for odd $p$ the \textbf{S-T} direction is unknown only for $d=3$ (and the \textbf{T-S} direction for all $d\geq4$). 

Here, we will approach the \textbf{S-T} direction in $\ZZ_p^3$ using linear programming bounds. This method provides estimates for the size of sets $B$ whose set of differences $B-B$ avoids a certain ``forbidden set". The method of linear programming bounds has seen widespread applications in coding theory \cite{Delsarte, KL78}, combinatorial designs \cite{dOV10}, discrete geometry, most notably in the sphere packing problem by Viazovska \cite{Packing24, V17}, who used the version by Cohn and Elkies \cite{CohnElkies}, which is also known as ``Delsarte's method" \cite{Delsarte}.

This method involves the notion of a \emph{positive definite} function, which is not new in this conjecture \cite{KMMS22, LM19}. The novelty of the present approach is the notion of a \emph{blocking set} in the finite projective plane, whose size estimates lead to the main result:

\begin{thm}\label{mainthm}
Let $A\ssq\ZZ_p^3$ be a set satisfying $p^2-p\sqrt{p}+\sqrt{p}<\abs{A}<p^2$. Then $A$ is not spectral.
\end{thm}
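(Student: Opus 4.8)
The plan is to recast spectrality as a difference condition, push it into the finite geometry of $PG(2,p)$, and then bound the size of a spectrum by a Delsarte-type linear programming (equivalently, Hoffman eigenvalue) argument on a Cayley graph whose spectrum is dictated by a blocking set. \textbf{Fourier reformulation and divisibility reduction.} Writing $\wh{\ZZ_p^3}\cong\ZZ_p^3$ additively, one has $\scal{\chi,\psi}_A=\wh{1_A}(\chi-\psi)$, so $(A,B)$ is a spectral pair exactly when $\abs A=\abs B$ and $(B-B)\sm\{0\}\ssq Z_A$, where $Z_A=\set{\xi\neq0:\wh{1_A}(\xi)=0}$. Thus it suffices to bound $\abs B$ for any $B$ whose nonzero differences lie in $Z_A$. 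First I would dispose of the arithmetic: a sum of $\abs A$ $p$-th roots of unity vanishes only if each residue occurs equally often, so $\wh{1_A}(\xi)=0$ forces $p\mid\abs A$ and means $A$ is equidistributed among the $p$ cosets of the hyperplane $\xi^\perp$. Hence if $p\nmid\abs A$ then $Z_A=\varnothing$ and a set of size $>1$ cannot be spectral; so from now on $\abs A=pm$ with $0<m<p$.

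\textbf{From vanishing frequencies to a blocking set.} Since $\wh{1_A}(t\xi)=0$ whenever $\wh{1_A}(\xi)=0$ and $t\neq0$, the set $Z_A$ is a union of punctured lines through the origin, encoded by a set $\mc Z\ssq PG(2,p)$ of vanishing directions with complement $\mc N$. Translating so that $0\in B$, for every $P\in\mc N$ the projection $\pi_P$ along $P$ is injective on $B$ (a collision would place $P$ in $\mc Z$), so $\pi_P(B)$ has $pm>p$ points in $AG(2,p)$ and therefore determines all $p+1$ directions; each such direction is carried by a secant of $B$, i.e.\ lies on a line of $PG(2,p)$ through $P$ that meets $\mc Z$. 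Thus every line through a point of $\mc N$ meets $\mc Z$, and lines inside $\mc Z$ meet it trivially, so $\mc Z$ is a blocking set. It is moreover \emph{non-trivial}: if a projective line $PG(U)$ lay in $\mc Z$, then $\wh{1_A}$ would vanish on $U\sm\{0\}$, forcing the pushforward of $1_A$ to $\ZZ_p^3/U^\perp\cong\FF_p^2$ to be the constant $\abs A/p^2=m/p$, which is not an integer since $0<m<p$. Hence $\mc Z$, and dually $\mc N$, is a blocking set containing no line.

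\textbf{The linear programming bound.} Now $B$ is a coclique of $\Gamma=\mathrm{Cay}(\ZZ_p^3,N_A)$, where $N_A$ is the set of nonzero non-vanishing frequencies. Its nontrivial eigenvalues are $\wh{1_{N_A}}(\chi)=p\abs{\mc N\cap\ell_\chi}-\abs{\mc N}$, indexed by the lines $\ell_\chi$ of $PG(2,p)$, with degree $(p-1)\abs{\mc N}$. Feeding the least eigenvalue into Hoffman's ratio bound and using $\abs{\mc N\cap\ell}\geq1$ (that $\mc N$ is blocking) already gives $\abs A=\abs B\le p^2\tfrac{\abs{\mc N}-p}{\abs{\mc N}-1}\le\tfrac{p^3}{p+1}<p^2-p+1$, which rules out the top range $p^2-p<\abs A<p^2$.

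\textbf{The main obstacle.} To reach the stated threshold I must sharpen this to $\abs B\le p^2-p\sqrt p+\sqrt p$, which through the ratio bound is equivalent to $\abs{\mc N}\le p\sqrt p+1$, i.e.\ to a strong lower bound $\abs{\mc Z}\ge p^2-p\sqrt p+p$ for the blocking set. This cannot follow from the blocking property alone, since lines and small blocking sets are far too small; the crux is therefore to replace the single-eigenvalue Hoffman estimate by the full Delsarte programme, constructing a positive-definite test function whose constraints encode the combinatorics of $\mc Z$ — the incidence identities $\sum_\ell\abs{\mc Z\cap\ell}=(p+1)\abs{\mc Z}$ and $\sum_\ell\binom{\abs{\mc Z\cap\ell}}{2}=\binom{\abs{\mc Z}}{2}$, together with size and line-type estimates for blocking sets in $PG(2,p)$. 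The appearance of $\sqrt p$ in the theorem is exactly the fingerprint of these blocking-set size estimates, and turning them into a feasible LP solution that controls the entire eigenvalue distribution, rather than only its extreme value, is the step I expect to be hardest.
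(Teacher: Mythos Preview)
Your framework is exactly the paper's: recast spectrality as $B-B\setminus\{0\}\subseteq Z_A$, pass to $PG(2,p)$, observe that both $\mathcal{Z}$ and $\mathcal{N}$ are blocking sets containing no line, and apply a Delsarte/Hoffman bound. Your partial result $|B|\le p^2\frac{|\mathcal{N}|-p}{|\mathcal{N}|-1}$ is also correct and matches what the paper would get with the witness function $\delta_{\mathcal{N}}+(|\mathcal{N}|-p)\delta_O$.

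The gap is in your final paragraph, and your own diagnosis of it is slightly off. You conclude that reaching the threshold forces $|\mathcal{N}|\le p\sqrt{p}+1$ and then propose to overcome this by ``the full Delsarte programme'' with incidence identities controlling the whole eigenvalue distribution. But $|\mathcal{N}|$ can genuinely be much larger than $p\sqrt{p}$, and no amount of double counting on $\mathcal{Z}$ will change that. The actual fix is much simpler and stays entirely within the ``single-eigenvalue'' regime: you do not have to put weight on all of $\mathcal{N}$. Since $\mathcal{N}$ is a blocking set, it contains a \emph{minimal} blocking set $S'\subseteq\mathcal{N}$; take as witness function $\widetilde{h}=\delta_{S'}+(|S'|-p)\delta_O$. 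This is still supported on $\mathcal{N}\cup\{O\}$, hence nonpositive on $\mathcal{Z}$, and $\widehat{\widetilde{h}}(Q)=p(|S'\cap Q^\perp|-1)\ge 0$ because $S'$ is itself blocking. Equivalently, in your Cayley-graph language: $B$ is also a coclique in the sparser graph $\mathrm{Cay}(\ZZ_p^3,N')$ where $N'$ is the union of punctured lines over $S'$, and Hoffman there gives $|B|\le p^2\frac{|S'|-p}{|S'|-1}$. Now the $\sqrt{p}$ enters not as a lower bound on $|\mathcal{Z}|$ but via Blokhuis's upper bound $|S'|<p\sqrt{p}+1$ for \emph{minimal} blocking sets in $PG(2,p)$, and the arithmetic finishes exactly at $p^2-p\sqrt{p}+\sqrt{p}$. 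So the missing idea is one move---pass to a minimal blocking subset of $\mathcal{N}$---rather than a more elaborate linear program.
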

It has been shown \cite{FMV19} that a spectral subset $A\ssq\ZZ_p^3$ which is neither a singleton nor the entire group, must have $pk$ elements, where $1\leq k\leq p$. If $k=1$ or $p$, then $A$ is also a tile, so we may restrict our attention to the range $1<k<p$. In \cite{FMV19} it was also shown that if $k=p-2$ or $p-1$, then $A$ cannot be spectral, so Theorem \ref{mainthm} extends this result.

The paper is organized as follows: in Section \ref{SpectralTile} we summarize all the known facts about spectral subsets and tiles in $\ZZ_p^3$; we also include the \textbf{T-S} proof in $\ZZ_p^3$ to make this article as self-contained as possible. In Section \ref{Delsarte} we introduce the first main tool for the proof of Theorem \ref{mainthm}, whereas in Section \ref{FourierProjective} we introduce the second, which involves the geometric structure of the finite projective plane. Finally, in Section \ref{Proof} we are ready to prove Theorem \ref{mainthm}, and in Section \ref{Future} we discuss several possible extensions of the approach presented herein.

\bigskip\bigskip
\section{Spectral sets and tiles in \texorpdfstring{$\ZZ_p^3$}{}}\label{SpectralTile}
\bigskip\bigskip
All results and arguments (or generalizations thereof) in this section have already appeared in \cite{Atenetal, FMV19, IMP15}. As usual, the Fourier transform of $f:G\to\CC$ is denoted by $\hat{f}$ or $\bs Ff$ and satisfies
\[\hat{f}(\xi)=\sum_{x\in G}f(x)\xi(-x)=\scal{f,\xi},\]
for every $\xi\in\wh{G}$. With this definition, the Fourier inversion formula is
\begin{equation}\label{eq:Fourier_inversion}
f(x)=\frac{1}{\abs{G}}\sum_{\xi\in\wh{G}}\hat{f}(\xi)\xi(x),
\end{equation}
and the Plancherel-Parseval formula becomes
\[\abs{G}\sum_{x\in G}\abs{f(x)}^2=\sum_{\xi\in\wh{G}}\abs{\hat{f}(\xi)}^2.\]
The convolution $f*g$ is defined as
\[f*g(x)=\sum_{y\in G}f(y)g(x-y),\]
and the Fourier transform converts convolution to pointwise product, i.~e.
\[\wh{f*g}=\hat{f}\cdot\hat{g}.\]
The operator $\bs U=\frac{1}{\sqrt{\abs{G}}}\bs F$ is unitary, and $\bs U^2$ preserves all even functions, that is, $\hat{\hat{f}}=\abs{G}f$ for every even function $f:G\to\CC$. Also, if we denote by $f_-$ the function $f_-(x)=f(-x)$, then $\abs{\wh{f*f_-}}=\abs{\hat{f}}^2$.

It is a fundamental fact that $G$ is isomorphic to $\wh{G}$, albeit non-canonically. For $G=\ZZ_p^3$, we fix such an isomorphism $x\mapsto\xi_x$ for each $x=(x_1,x_2,x_3)\in\ZZ_p^3$, where
\[\xi_x(y)=\ze_p^{\scal{x,y}}.\]
Here, $\ze_p=e^{2\pi i/p}$ and
\[\scal{x,y}=x_1y_1+x_2y_2+x_3y_3, \;\;\; \text{ for }y=(y_1,y_2,y_3)\in\ZZ_p^3.\]
Under this identification, we write $\hat{f}(x)$ instead of $\hat{f}(\xi_x)$, and each character $\xi_x$ with $x\neq0$ is an epimorphism from $\ZZ_p^3$ to the multiplicative group of the $p$-th roots of unity; the kernel of $\xi_x$ is precisely the set of all $y$ that are ``orthogonal'' to $x$, that is, $\scal{x,y}=0$. When $\ZZ_p^3$ is viewed as a $3$-dimensional space over the finite field $\ZZ_p$, the kernel is then a plane orthogonal to $x$, which we denote by $x^\perp$. A fact that challenges our intuition from the Euclidean space $\RR^3$ is that we may have $\scal{x,x}=0$, or equivalently, we may have $V+V^\perp\neq\ZZ_p^3$ for subspaces $V\ssq\ZZ_p^3$, but this won't be an obstacle to our investigations.

We may also identify the spectrum of a spectral set $A\ssq\ZZ_p^3$ with a subset of $\ZZ_p^3$ itself. So, $B\ssq\ZZ_p^3$ is a spectrum of $A$ if $\scal{\xi_b,\xi_{b'}}_A=0$, for every pair $b, b'$ of distinct elements in $B$. Hence,
\[0=\scal{\xi_b,\xi_{b'}}_A=\sum_{a\in A}\xi_b(a)\ol{\xi_{b'}(a)}=\sum_{x\in A}\bs1_A(x)\xi_{b-b'}(x)=\wh{\bs1}_A(b'-b),\]
where the function $\bs1_A$ denotes the indicator function on the set $A$,
yielding a basic fact on spectra.
\begin{prop}\label{prop:basic}
A set of pairwise orthogonal characters, restricted on $A$ and indexed by $B\ssq\ZZ_p^3$, satisfies 
\[B-B:=\set{b-b':b,b'\in B}\ssq Z(\wh{\bs1}_A)\cup\set{0},\]
where $Z(f)=\set{x\in\ZZ_p^3:f(x)=0}$ is the zero set of the function $f$. In addition, $\abs{A}=\abs{B}$ holds, if and only if $B$ is a spectrum of $A$.
\end{prop}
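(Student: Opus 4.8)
The plan is to unwind the orthogonality hypothesis directly through the Fourier transform, following the displayed computation that precedes the statement. First I would fix distinct indices $b,b'\in B$ and expand the inner product over $A$. Using $\xi_b(a)\ol{\xi_{b'}(a)}=\ze_p^{\scal{b-b',a}}=\xi_{b-b'}(a)$, the inner product becomes $\sum_{a\in A}\xi_{b-b'}(a)=\sum_{x\in\ZZ_p^3}\bs1_A(x)\xi_{b-b'}(x)$, which by the definition $\hat{f}(\xi_y)=\sum_x f(x)\xi_y(-x)$ is exactly $\wh{\bs1}_A(b'-b)$. Hence the vanishing $\scal{\xi_b,\xi_{b'}}_A=0$ for every pair of distinct indices is equivalent to $b'-b\in Z(\wh{\bs1}_A)$ for all such pairs; adjoining the diagonal contribution $b=b'$, which gives $0$, produces the asserted inclusion $B-B\ssq Z(\wh{\bs1}_A)\cup\set{0}$.

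For the second assertion I would run a dimension count in the $\abs{A}$-dimensional space $\CC^A$. Each restricted character $\xi_b$ satisfies $\scal{\xi_b,\xi_b}_A=\abs{A}\neq0$, so it is a nonzero vector; being pairwise orthogonal, the family $\set{\xi_b:b\in B}$ is automatically linearly independent, forcing $\abs{B}\leq\abs{A}$ under the standing hypothesis. This family is therefore an orthogonal basis of $\CC^A$ if and only if $\abs{B}=\abs{A}$, and the latter is precisely the definition of $B$ being a spectrum of $A$. The equivalence follows at once.

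The computations are routine, so there is no serious obstacle; the only point deserving care is the sign bookkeeping in the Fourier transform, where the difference $b-b'$ appearing in the character becomes the argument $b'-b$ of $\wh{\bs1}_A$. This is harmless for the stated inclusion: since $\bs1_A$ is real-valued we have $\wh{\bs1}_A(-y)=\ol{\wh{\bs1}_A(y)}$, so $Z(\wh{\bs1}_A)$ is symmetric under negation and the choice between $b-b'$ and $b'-b$ is immaterial.
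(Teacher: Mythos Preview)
Your proposal is correct and follows essentially the same approach as the paper: the displayed computation preceding the proposition is exactly the Fourier expansion you describe, and the second assertion is immediate from the paper's definition of a spectrum (which already requires $\abs{A}=\abs{B}$ together with pairwise orthogonality). Your added remarks on the sign bookkeeping and the dimension count in $\CC^A$ are accurate but slightly more than the paper bothers to spell out.
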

Next we will study the main properties of spectral sets, especially the nontrivial ones, i.~e. those that are neither singletons nor the entire group. First of all, any nontrivial spectral set $A\ssq\ZZ_p^3$ must satisfy $Z(\wh{\bs1}_A)\neq\vn$. Therefore, the zero set $Z(\wh{\bs1}_A)$ is a union of \emph{punctured lines}, i.~e. one-dimensional subspaces of $\ZZ_p^3$ with the origin missing. Indeed, if $x\in Z(\wh{\bs1}_A)$ and $\la\in\ZZ_p^*$ is arbitrary, then
\[\wh{\bs1}_A(\la x)=\sum_{a\in A}\ze_p^\scal{\la x,-a}=\sum_{a\in A}\sig(\ze_p^\scal{x,-a})=\sig\bra{\sum_{a\in A}\ze_p^\scal{x,-a}}=\sig(\wh{\bs1}_A(x))=0,\]
where $\sig\in\Gal(\QQ(\ze_p)/\QQ)$ which satisfies $\sig(\ze_p)=\ze_p^\la$. We usually denote by $L$ and $H$ lines and planes, and by $L^*$, $H^*$ their punctured counterparts, when $L$ and $H$ contain the origin.

The structure of the vanishing sums of $p$-th roots of unity\footnote{A very special case of the main result in \cite{Redei54}.} is the next crucial tool towards a first characterization of spectral subsets of $\ZZ_p^3$.
\begin{prop}\label{prop:vanishingpsums}
If $a_0,\dotsc,a_{p-1}$ are integers such that
\begin{equation}\label{eq:vanishingpsums}
a_0+a_1\ze_p+\dotsb+a_{p-1}\ze_p^{p-1}=0,
\end{equation}
then $a_0=a_1=\dotsb=a_{p-1}$.
\end{prop}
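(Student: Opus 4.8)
The plan is to reduce the statement to the irreducibility of the $p$-th cyclotomic polynomial $\Phi_p(X)=1+X+\dotsb+X^{p-1}$ over $\QQ$. First I would package the hypothesis into a single polynomial: set $f(X)=a_0+a_1X+\dotsb+a_{p-1}X^{p-1}\in\ZZ[X]$, so that \eqref{eq:vanishingpsums} says exactly $f(\ze_p)=0$. Thus $\ze_p$ is a root of $f$, which has degree at most $p-1$, and the entire problem becomes a question about the minimal polynomial of $\ze_p$.

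The one substantive ingredient is the classical fact that $\Phi_p$ is the minimal polynomial of $\ze_p$ over $\QQ$; equivalently, that $\Phi_p$ is irreducible over $\QQ$. This is the real content of the proposition, and I expect it to be the only nontrivial step. The standard argument is to substitute $X\mapsto X+1$, after which $\Phi_p(X+1)=\frac{(X+1)^p-1}{X}=\sum_{k=1}^{p}\binom{p}{k}X^{k-1}$ satisfies Eisenstein's criterion at the prime $p$: every middle binomial coefficient $\binom{p}{k}$ with $1\le k\le p-1$ is divisible by $p$, the leading coefficient is $1$, and the constant term equals $p$, which is not divisible by $p^2$. Hence $\Phi_p(X+1)$, and therefore $\Phi_p$, is irreducible over $\QQ$, and since $\Phi_p(\ze_p)=0$ it is the minimal polynomial of $\ze_p$.

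With this in hand the conclusion is immediate. Because $\Phi_p$ is the minimal polynomial of $\ze_p$ and $f(\ze_p)=0$, we have $\Phi_p\mid f$ in $\QQ[X]$; but $\deg\Phi_p=p-1\ge\deg f$, so the quotient is a constant $c$ and $f=c\,\Phi_p$. Comparing coefficients gives $a_0=a_1=\dotsb=a_{p-1}=c$, as claimed (the integrality of $c$ being automatic, though not needed). A slightly more hands-on variant avoids polynomial division altogether: subtracting the relation $a_{p-1}\bra{1+\ze_p+\dotsb+\ze_p^{p-1}}=0$ from \eqref{eq:vanishingpsums} eliminates the top power and yields $\sum_{j=0}^{p-2}(a_j-a_{p-1})\ze_p^j=0$, after which the $\QQ$-linear independence of $\set{1,\ze_p,\dotsc,\ze_p^{p-2}}$ — itself a restatement of the degree-$(p-1)$ irreducibility above — forces $a_j=a_{p-1}$ for every $j$. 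Either route shows that the whole proposition rests on the irreducibility of $\Phi_p$, with no further obstacle once that is established.
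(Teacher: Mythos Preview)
Your proof is correct. You reduce to the irreducibility of $\Phi_p$ over $\QQ$ (supplied via Eisenstein at $p$ after the shift $X\mapsto X+1$) and then use that the minimal polynomial divides any rational polynomial vanishing at $\ze_p$; since $\deg f\le p-1=\deg\Phi_p$, this forces $f=c\,\Phi_p$ and hence $a_0=\dotsb=a_{p-1}$. The paper takes a somewhat different route: it applies all $p-1$ Galois automorphisms of $\QQ(\ze_p)/\QQ$ to the relation, obtaining $p-1$ linear equations that say the coefficient vector $(a_0,\dotsc,a_{p-1})$ is orthogonal in $\CC^p$ to the $p-1$ nontrivial rows of the discrete Fourier matrix; since those rows together with the all-$1$ row form a basis, the vector must be a scalar multiple of $(1,\dotsc,1)$. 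Both arguments ultimately rest on the same fact, namely $[\QQ(\ze_p):\QQ]=p-1$, but yours is a self-contained polynomial-divisibility argument that proves this input, whereas the paper presupposes the Galois structure and phrases the conclusion via linear algebra, which dovetails with the Fourier-analytic language used elsewhere in the article.
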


\begin{proof}
Taking all possible Galois automorphisms on both sides of \eqref{eq:vanishingpsums}, we get
\[a_0+a_1\ze_p^\la+\dotsb+a_{p-1}\ze_p^{\la(p-1)}=0, \;\;\; \forall 1\leq\la\leq p-1,\]
or equivalently, the vector $\bs a=(a_0,\dotsc,a_{p-1})$ is orthogonal to each of the $p-1$ vectors $(1,\ze_p^\la,\dotsc,\ze_p^{\la(p-1)})$. Therefore, it must be parallel to the all-1 vector, $\bs1=(1,1,\dotsc,1)$, proving the desired fact.
\end{proof}

A simple consequence of Proposition \ref{prop:vanishingpsums} is the fact that $x\in Z(\wh{\bs1}_A)$ if and only if $A$ is equidistributed with respect to the $p$ planes parallel to $x^\perp$. Indeed, if we denote
\[H_j=\set{y\in\ZZ_p^3:\scal{x,y}=j},\]
we have
\[0=\wh{\bs1}_A(x)=\sum_{a\in A}\ze_p^\scal{x,a}=\sum_{j=0}^{p-1}\abs{A\cap H_j}\ze_p^j,\]
hence $\abs{A\cap H_0}=\abs{A\cap H_1}=\dotsb=\abs{A\cap H_{p-1}}$.
\begin{thm}\label{thm:charspec}
Let $A\ssq\ZZ_p^3$ be a nontrivial spectral set. Then $\abs{A}=pk$, with $1\leq k\leq p$. Moreover,
\begin{enumerate}
    \item if $\abs{A}=p$ or $p^2$, then $A$ is a tile.
    \item\label{empty} if $1<k<p$ (assuming that such a spectral set exists), then $Z(\wh{\bs1}_A)$ intersects every punctured plane, but contains none.
\end{enumerate}
\end{thm}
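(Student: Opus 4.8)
The plan is to run everything off the two mechanisms already set up in the excerpt: the equidistribution criterion (for $x\in Z(\wh{\bs1}_A)$, the set $A$ splits evenly among the $p$ planes parallel to $x^\perp$) and the containment $B-B\ssq Z(\wh{\bs1}_A)\cup\set0$ from Proposition \ref{prop:basic}. For the cardinality, since $A$ is nontrivial we have $Z(\wh{\bs1}_A)\neq\vn$; picking any $x$ there and applying equidistribution gives $\abs{A}=p\cdot\abs{A\cap x^\perp}$, so $p\mid\abs A$ and $\abs A=pk$ with $k\geq1$. For the bound $k\leq p$ I would note that, as $A$ is not the whole group, $\wh{\bs1}_A$ is not supported at $0$ alone, so some punctured line $\scal{w}^*$ is disjoint from $Z(\wh{\bs1}_A)$. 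Then $B-B$ meets $\scal{w}$ only at $0$, so the quotient map $B\rar\ZZ_p^3/\scal{w}\cong\ZZ_p^2$ is injective; hence $\abs B\le p^2$, and $\abs A=\abs B$ gives $k\le p$.

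For part (1), the case $\abs A=p$ falls out of equidistribution with respect to a single $x\in Z(\wh{\bs1}_A)$: each of the $p$ planes parallel to $x^\perp$ then carries exactly one point of $A$, so the translates $a+x^\perp$ ($a\in A$) are precisely those $p$ planes and partition the group, whence $A\oplus x^\perp=\ZZ_p^3$ and $A$ is a tile. For $\abs A=p^2$ I would invoke the symmetry of the spectral relation: the square matrix $[\xi_b(a)]_{a\in A,\,b\in B}$ has orthogonal columns of equal norm, so its rows are orthogonal too, which is exactly the statement $A-A\ssq Z(\wh{\bs1}_B)\cup\set0$. Since $B$ is not the whole group, some $\scal{w}^*$ avoids $Z(\wh{\bs1}_B)$; then $A-A$ avoids $\scal{w}^*$, so $A$ meets each coset of $\scal{w}$ at most once, and $\abs A=p^2=\abs{\ZZ_p^3/\scal{w}}$ forces $A\oplus\scal{w}=\ZZ_p^3$.

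For part (2), assume $1<k<p$. If some punctured plane $H^*$ were disjoint from $Z(\wh{\bs1}_A)$, then $B-B$ would meet $H$ only at $0$, making $B\rar\ZZ_p^3/H\cong\ZZ_p$ injective and forcing $\abs B\le p$, i.e. $k\le1$, a contradiction; so $Z(\wh{\bs1}_A)$ meets every punctured plane. Conversely, suppose $H^*\ssq Z(\wh{\bs1}_A)$. Choosing coordinates with $H=\set{x_3=0}$ and setting $g(a_1,a_2)=\abs{\set{a_3:(a_1,a_2,a_3)\in A}}$, the restriction of $\wh{\bs1}_A$ to $H^*$ is exactly the two–dimensional transform $\hat g$, which therefore vanishes off the origin; by Fourier inversion on $\ZZ_p^2$ the function $g$ is constant, so $p^2\mid\abs A$. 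Together with $k\le p$ this yields $k=p$, contradicting $k<p$; hence $Z(\wh{\bs1}_A)$ contains no punctured plane.

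The step I expect to be most delicate is the $\abs A=p^2$ case of part (1), since it is the only place that genuinely uses the symmetry of the spectral relation, so I would take care to justify cleanly that orthogonality of the columns of $[\xi_b(a)]$ forces orthogonality of the rows (equivalently, that $A$ is itself a spectrum of $B$, giving $A-A\ssq Z(\wh{\bs1}_B)\cup\set0$); everything else reuses the single template \emph{``the difference set avoids a punctured line, hence the projection to the quotient is injective''}. A secondary bookkeeping point is ordering: the constancy–of–$g$ computation only produces a contradiction once $k\le p$ has been established, so the cardinality bound must be proved before part (2).
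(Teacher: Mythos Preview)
Your proof is correct and shares the paper's overall template (equidistribution plus Proposition~\ref{prop:basic}), but two of your sub-arguments differ from the paper's and are arguably cleaner. For the bound $k\le p$, the paper argues by contradiction: if $\abs{B}>p^2$, a pigeonhole over the planes parallel to an arbitrary $x^\perp$ shows $B-B$ meets every punctured line, forcing $\wh{\bs1}_A$ to vanish on $\ZZ_p^3\sm\set{0}$ and hence $A=\ZZ_p^3$; your route---pick any $w\neq0$ with $\wh{\bs1}_A(w)\neq0$ and inject $B$ into $\ZZ_p^3/\scal{w}$---is shorter and reuses exactly the quotient-injection trick you need again in part~(2). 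For the claim that $Z(\wh{\bs1}_A)$ contains no punctured plane, the paper observes that $H^*\ssq Z(\wh{\bs1}_A)$ would make $\set{\xi_x:x\in H}$ a family of $p^2$ pairwise orthogonal functions in $\CC^A$, impossible since $\dim\CC^A=\abs{A}<p^2$; your Fourier-on-$\ZZ_p^2$ argument (the column-count $g$ has $\hat g$ supported at the origin, hence $g$ is constant and $p^2\mid\abs{A}$) reaches the same contradiction by a different mechanism. The remaining steps---$p\mid\abs{A}$, the $\abs{A}=p$ tiling, and the $\abs{A}=p^2$ tiling via the spectral symmetry $A-A\ssq Z(\wh{\bs1}_B)\cup\set{0}$---match the paper essentially verbatim.
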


\begin{proof}
Since we have shown that $\wh{\bs1}_A$ vanishes at some $x\in\ZZ_p^3$, $A$ must be equidistributed with respect to the $p$ planes parallel to $x^\perp$. This shows that $p\mid\abs{A}$. 

If $\abs{A}>p^2$, then the same must hold for a spectrum $B$. Let $x\neq0$ be arbitrary, and consider the intersections of $B$ with the planes parallel to $x^\perp$; at least one of them must contain at least $p+1$ elements. But then, $B-B$ intersects all punctured lines in $x^\perp$. Indeed, let $b_0,\dotsc,b_p\in B$ such that $\scal{b_j,x}$ is constant for $0\leq j\leq p$. Then, the $p+1$ elements of $B-b_0$, $0, b_1-b_0,\dotsc,b_p-b_0$, are all in $x^\perp$; therefore, for every line $L\ssq x^\perp$, there is a parallel one $L'$ in $x^\perp$ that contains at least two elements of $B-b_0$. The difference of these two elements belongs to $(B-B)\cap L^*$. Since $x$ is arbitrary, and $L\ssq x^\perp$ is arbitrary as well, the above argument shows that $B-B$ intersects every punctured line, hence Proposition \ref{prop:basic} yields that $\wh{\bs1}_A$ vanishes on every nonzero element of $\ZZ_p^3$, and then \eqref{eq:Fourier_inversion} shows that $\bs1_A(x)=\frac{\abs{A}}{p^3}$ for all $x\in\ZZ_p^3$, which can only hold if $A=\ZZ_p^3$, contradicting the fact that $A$ is nontrivial. Thus, $\abs{A}=pk$, with $1\leq k\leq p$.

Suppose now that $k=1$; since $\wh{\bs1}_A$ vanishes at some $x\in\ZZ_p^3$, $A$ must be equidistributed with respect to the planes parallel to $H=x^\perp$. which shows that every such plane must contain exactly one element of $A$. It is then obvious that $A\oplus H=\ZZ_p^3$, and $A$ is a tile.

Next, suppose that $k=p$. If $A-A$ intersects every punctured line, then by Proposition \ref{prop:basic} with the roles of $A$ and $B$ reversed, $\wh{\bs1}_B$ would vanish everywhere except for the origin, which would yield again $B=\ZZ_p^3$ as above, a contradiction. So, there is a line $L$ through the origin such that $(A-A)\cap L^*=\vn$, hence $A+L=A\oplus L$. Since $\abs{A}=p^2$ and $\abs{L}=p$, we will have $A\oplus L=\ZZ_p^3$, showing that $A$ is a tile.

Finally, assume that there exists a spectral set $A$ with $1<k<p$. If there existed a plane $H$ through the origin such that $Z(\wh{\bs1}_A)\cap H^*=\vn$, then $(B-B)\cap H=\set{0}$ for a spectrum $B$ of $A$, by Proposition \ref{prop:basic}. But then no two elements of $B$ would be congruent modulo the subgroup $H$, which would imply $\abs{B}\leq\abs{\ZZ_p^3/H}=p<\abs{A}$, a contradiction. Thus, $Z(\wh{\bs1}_A)$ intersects every punctured plane.

If there existed a plane $H$ through the origin such that $H^*\ssq Z(\wh{\bs1}_A)$, the plane itself would satisfy $H-H=H\ssq Z(\wh{\bs1}_A)\cup\set{0}$, which would imply that the $\abs{H}=p^2$ characters $\xi_x$, for $x\in H$, would be pairwise orthogonal, when restricted to $A$. This is impossible, as the dimension of complex functions on $A$ has cardinality $\abs{A}<p^2$. Thus, $Z(\wh{\bs1}_A)$ contains no punctured plane, completing the proof.
\end{proof}

We end this Section with the proof that \textbf{T-S} holds in $\ZZ_p^3$.
\begin{thm}
Every tile $A\ssq\ZZ_p^3$ is spectral.
\end{thm}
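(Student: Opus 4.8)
The plan is to exploit the tiling identity at the level of Fourier transforms and then simply exhibit, in each relevant size regime, an explicit subgroup that serves as a spectrum. Write $A\oplus T=\ZZ_p^3$. Since $\abs{A}\abs{T}=p^3$, the cardinality $\abs{A}$ is a power of $p$, so $\abs{A}\in\set{1,p,p^2,p^3}$; the cases $\abs{A}=1$ and $\abs{A}=p^3$ give trivially spectral sets (spectrum a single character, respectively all of $\ZZ_p^3$), so I would reduce to $\abs{A}\in\set{p,p^2}$. The one common tool is the convolution relation $\bs1_A\ast\bs1_T=\bs1_{\ZZ_p^3}$, which after Fourier transform becomes $\wh{\bs1}_A(\xi)\wh{\bs1}_T(\xi)=0$ for every $\xi\neq0$. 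In particular every nonzero $\xi$ lies in $Z(\wh{\bs1}_A)$ or in $Z(\wh{\bs1}_T)$, and more usefully $\set{\xi\neq0:\wh{\bs1}_T(\xi)\neq0}\ssq Z(\wh{\bs1}_A)$ and symmetrically. By Proposition \ref{prop:basic} it then suffices, in each case, to produce a subgroup $B$ of size $\abs{A}$ whose punctured version lies inside $Z(\wh{\bs1}_A)$.

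For $\abs{A}=p$ (so $\abs{T}=p^2$) I would first note $Z(\wh{\bs1}_A)\neq\vn$: otherwise $\wh{\bs1}_T$ would vanish at every nonzero point, and \eqref{eq:Fourier_inversion} would force $\bs1_T$ to be constant, contradicting $1<\abs{T}<p^3$. Since $Z(\wh{\bs1}_A)$ is a union of punctured lines (as recalled just before Proposition \ref{prop:vanishingpsums}), it contains a full punctured line $L^*$ for some $1$-dimensional subgroup $L$. Taking $B=L$ gives $B-B=L\ssq Z(\wh{\bs1}_A)\cup\set{0}$ with $\abs{B}=p=\abs{A}$, so $L$ is a spectrum.

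For $\abs{A}=p^2$ (so $\abs{T}=p$) the key idea is to build the spectrum from a difference of the \emph{complement}. I would pick distinct $t,t'\in T$, set $\delta=t-t'\neq0$, and take $H=\delta^{\perp}$, a $2$-dimensional subgroup of size $p^2$. For every $\xi\in H$ one has $\scal{\xi,t}=\scal{\xi,t'}$, so the map $s\mapsto\scal{\xi,s}$ is not injective on $T$; as $\abs{T}=p$ it is not a bijection, hence the residues $\set{\scal{\xi,s}:s\in T}$ are not all distinct. Writing $\wh{\bs1}_T(\xi)=\sum_{s\in T}\ze_p^{-\scal{\xi,s}}$ as a combination $\sum_j c_j\ze_p^{-j}$ with $\sum_j c_j=p$, Proposition \ref{prop:vanishingpsums} shows this vanishes only when all $c_j=1$, i.e. exactly in the separating case; therefore $\wh{\bs1}_T(\xi)\neq0$ for all $\xi\in H^*$. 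Consequently $H^*\ssq\set{\xi\neq0:\wh{\bs1}_T(\xi)\neq0}\ssq Z(\wh{\bs1}_A)$, and since $H-H=H$ with $\abs{H}=p^2=\abs{A}$, Proposition \ref{prop:basic} identifies $H=\delta^{\perp}$ as a spectrum of $A$.

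The only genuinely nonroutine step is spotting the right spectrum candidates; once that is done the verification is immediate from the vanishing‑sum structure. I expect the main conceptual obstacle to be the $\abs{A}=p^2$ case, where the naive guesses (e.g. the perpendicular of the special direction produced by the size‑$p$ analysis of $T$) need not work because some plane direction may still separate $T$; the clean resolution is that choosing $H$ \emph{perpendicular to an actual difference} $t-t'$ forces a collision for \emph{every} $\xi\in H$, so that $\wh{\bs1}_T$ never vanishes on $H^*$ and the orthogonality of the characters indexed by $H$ is automatic. I would also remark that the two cases are dual to one another, the $\abs{A}=p$ argument being the same construction applied with the roles of $A$ and $T$ interchanged.
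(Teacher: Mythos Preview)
Your proof is correct and follows essentially the same route as the paper: reduce to $\abs{A}\in\set{p,p^2}$ via the convolution identity, take a line in $Z(\wh{\bs1}_A)$ as spectrum when $\abs{A}=p$, and for $\abs{A}=p^2$ take the plane $H=\de^{\perp}$ orthogonal to a nonzero difference $\de=t-t'$ in $T$, using the vanishing $p$-sums structure to show $\wh{\bs1}_T$ is nowhere zero on $H^*$. The only cosmetic difference is phrasing: where the paper speaks of $T$ not being equidistributed among the hyperplanes parallel to $\xi^{\perp}$, you speak of the map $s\mapsto\scal{\xi,s}$ failing to be injective --- these are the same observation.
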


\begin{proof}
 Let $T$ be a tiling complement of $A$. $A\oplus T=\ZZ_p^3$ written in functional notation is $\bs1_A*\bs1_T=\bs1_{\ZZ_p^3}$. Applying the Fourier transform on both sides we get $\wh{\bs1}_A\wh{\bs1}_T=p^3\bs1_0$, hence 
\begin{equation}\label{eq:exclusivesupport}
\supp(\wh{\bs1}_A)\cap\supp(\wh{\bs1}_T)=\set{0}.
\end{equation}
We have already seen that both supports are unions of punctured lines, along with the origin. If the support of $A$ is the entire group $\ZZ_p^3$, then $\supp(\wh{\bs1}_T)=\set{0}$, which implies that $T=\ZZ_p^3$ and $A$ a singleton, and vice versa, if $\supp(\wh{\bs1}_T)=\ZZ_p^3$, we get $A=\ZZ_p^3$. So, if we assume that $A$ is nontrivial, so that $\abs{A}=p$ or $p^2$, we get that both $\wh{\bs1}_A$ and $\wh{\bs1}_T$ must vanish somewhere. 

Suppose first that $\abs{A}=p$. Consider a punctured line $L^*\ssq Z(\wh{\bs1}_A)$. Then, the line $L$ is a spectrum of $A$ by Proposition \ref{prop:basic}, as $\abs{A}=\abs{L}$ and $L-L=L\ssq Z(\wh{\bs1}_A)\cup\set{0}$.

Finally, suppose that $\abs{A}=p^2$, hence $\abs{T}=p$. Consider a line $L$ through two points of $T$; now let a plane $H$ through the origin that is orthogonal to the direction of $L$. For any $x\in H^*$, we must have $\wh{\bs1}_T(x)\neq0$ by Proposition \ref{prop:vanishingpsums}, since $T$ is not equidistributed with respect to the planes parallel to $x^\perp$ (the one containing $L$ has at least 2 elements of $T$). Therefore, \eqref{eq:exclusivesupport} yields $H^*\ssq Z(\wh{\bs1}_A)$, and $H$ is a spectrum of $A$ by Proposition \ref{prop:basic}, since $H-H=H\ssq Z(\wh{\bs1}_A)\cup\set{0}$ and $\abs{H}=\abs{A}$.
\end{proof}

\bigskip\bigskip
\section{Delsarte's method and balanced functions}\label{Delsarte}
\bigskip\bigskip
Regardless whether $A\ssq\ZZ_p^3$ is spectral or not, we may ask how many pairwise orthogonal characters exist, when they are restricted to $A$. Of course, there are at most $\abs{A}$ many such characters, with equality if and only if $A$ is spectral. In general, Proposition \ref{prop:basic} shows that if we have a set of characters, indexed by $B\ssq\ZZ_p^3$, such that $\scal{\xi_b,\xi_{b'}}_A=0$ for every $b,b'\in B$ with $b\neq b'$, then the difference set $B-B$ must avoid $\supp(\wh{\bs1}_A)\sm\set{0}$, which may be viewed as a ``forbidden set".

\begin{defn}
Let $E\ssq G$, such that $0\in E=-E$. We call $h:G\to \RR$ a \emph{witness function} with respect to $E$, if it satisfies the following properties:
\begin{enumerate}
\item $h$ is an even function, such that $h(x)\leq0$, $\forall x\in G\sm E$.
\item $\hat{h}(\xi)\geq0$, $\forall \xi\in\hat{G}$, and $\hat{h}(0)>0$.
\end{enumerate}
\end{defn}
The following theorem, which is also known as \emph{Delsarte's method}, is the main tool for estimating the maximal size of $\abs{B}$, where $A$ accepts pairwise orthogonal characters, restricted on $A$ and indexed by $B$.
The proof is adapted from \cite{Mat15}, in the setting of finite additive groups.
\begin{thm}\label{thm:linprog}
With notation as above, let $B\ssq G$ be such that $B-B\ssq \set{0}\cup (G\sm E)$, and let $h$ be a witness function for $E$. Then,
\[\abs{B}\leq\abs{G}\frac{h(0)}{\hat{h}(0)}.\]
\end{thm}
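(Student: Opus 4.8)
The plan is to run the classical double-counting / averaging argument that underlies all Delsarte-type linear programming bounds, expressing a suitable sum over $B\times B$ in two ways: once positionally (using property (1) of the witness function) and once spectrally (using property (2) via Fourier analysis). The key quantity to consider is
\begin{equation}\label{eq:ouwitness}
S=\sum_{b,b'\in B}h(b-b').
\end{equation}
The whole proof hinges on bounding $S$ from above using the positional hypothesis on $B$, and from below using nonnegativity of $\hat h$; comparing the two bounds yields the desired inequality on $\abs{B}$.

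First I would bound $S$ from above. For each pair $(b,b')\in B\times B$ with $b\neq b'$, the hypothesis $B-B\ssq\set{0}\cup(G\sm E)$ forces $b-b'\in G\sm E$, so property (1) of the witness function gives $h(b-b')\leq0$. The diagonal terms, where $b=b'$, contribute $\abs{B}\,h(0)$. Hence
\[S=\abs{B}\,h(0)+\sum_{\substack{b,b'\in B\\ b\neq b'}}h(b-b')\leq\abs{B}\,h(0).\]
Next I would bound $S$ from below by passing to the Fourier side. Writing $h$ in terms of its Fourier transform via the inversion formula \eqref{eq:Fourier_inversion}, one has $h(b-b')=\frac{1}{\abs{G}}\sum_{\xi\in\wh G}\hat h(\xi)\xi(b-b')$, and substituting this into \eqref{eq:ouwitness} and interchanging the order of summation produces
\[S=\frac{1}{\abs{G}}\sum_{\xi\in\wh G}\hat h(\xi)\,\abs[\Big]{\sum_{b\in B}\xi(b)}^2,\]
after recognizing $\sum_{b,b'}\xi(b)\ol{\xi(b')}=\bigl|\sum_{b\in B}\xi(b)\bigr|^2$ (using $\xi(b-b')=\xi(b)\ol{\xi(b')}$ since $\xi$ is a character). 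Now property (2), namely $\hat h(\xi)\geq0$ for all $\xi$, guarantees every term of this spectral sum is nonnegative, so I may discard all terms except the one at $\xi=0$. Since $\xi_0$ is the trivial character, $\sum_{b\in B}\xi_0(b)=\abs{B}$, and therefore
\[S\geq\frac{1}{\abs{G}}\,\hat h(0)\,\abs{B}^2.\]

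Combining the two estimates gives $\frac{1}{\abs{G}}\hat h(0)\abs{B}^2\leq S\leq\abs{B}\,h(0)$, and dividing through by $\abs{B}\,\hat h(0)>0$ (legitimate because $\hat h(0)>0$ by property (2), and we may assume $B\neq\vn$) yields $\abs{B}\leq\abs{G}\,\frac{h(0)}{\hat h(0)}$, as claimed. I do not expect any genuine obstacle here: the argument is a clean two-sided estimate and the hypotheses on $h$ have been tailored precisely so that each sign works out. The only points demanding minor care are the bookkeeping in the interchange of summation and the observation that the cross terms $\xi(b)\ol{\xi(b')}$ assemble into a perfect square $\bigl|\sum_{b\in B}\xi(b)\bigr|^2$ — this is where evenness of $h$ (ensuring $\hat h$ is real) and the character identity are used, so that the spectral sum is a sum of manifestly nonnegative reals and discarding the non-principal terms is valid.
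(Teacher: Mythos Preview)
Your proof is correct and is essentially the same Delsarte double-counting argument as the paper's: the paper defines $S=\sum_{\xi}\abs{\wh{\bs1}_B(\xi)}^2\hat h(\xi)$ and unfolds it to $\abs{G}\sum_{b,b'}h(b'-b)$, while you define $S=\sum_{b,b'}h(b-b')$ and unfold it to $\frac{1}{\abs{G}}\sum_{\xi}\hat h(\xi)\abs{\sum_b\xi(b)}^2$, so the two $S$'s differ only by the factor $\abs{G}$ and the order of presentation. The upper and lower estimates and the final comparison are identical.
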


\begin{proof}
It holds
\[\hat{\bs 1}_B(\xi)=\sum_{b\in B}\xi(-b).\]
Put
\[S=\sum_{\xi\in\hat{G}}\abs{\hat{\bs 1}_B(\xi)}^2\hat{h}(\xi).\]
Since $\hat{h}$ is nonnegative, we obtain the estimate
\[S\geq \abs{\hat{\bs1}_B(0)}^2\hat{h}(0)=\abs{B}^2\hat{h}(0).\]
On the other hand,
\begin{align*}
S &=\sum_{\xi\in\hat{G}}\hat{\bs1}_B(\xi)\ol{\hat{\bs1}_B(\xi)}\hat{h}(\xi)\\
 &=\sum_{\xi\in\hat{G}}\bra{\sum_{b\in B}\xi(-b)\sum_{b'\in B}\xi(b')}\hat{h}(\xi)\\
&=\sum_{\xi\in\hat{G}}\sum_{b,b'\in B}\xi(b'-b)\hat{h}(\xi)\\
&=\abs{G}\sum_{b,b'\in B}h(b'-b)\\
&\leq \abs{G}\abs{B}h(0),
\end{align*}
using the Fourier inversion, the definition of a witness function and the hypothesis $B-B\ssq \set{0}\cup (G\sm E)$. Combining the two estimates above we obtain
\[\abs{B}^2\hat{h}(0)\leq S\leq \abs{B}\abs{G}h(0),\]
thus,
\[\abs{B}\leq\abs{G}\frac{h(0)}{\hat{h}(0)},\]
as desired.
\end{proof}
Our goal is to show that there can be no spectral set, as described in Theorem \ref{thm:charspec}\eqref{empty}, by finding a witness function $h$ with respect to $E=\supp(\wh{\bs1}_A)$ that satisfies
\[\frac{h(0)}{\hat{h}(0)}<\frac{\abs{A}}{p^3}=\frac{k}{p^2}.\]
We note that the function $h_A=\abs{\wh{\bs1}_A}^2$ attains equality, as 
\[\wh{\abs{\wh{\bs1}_A}^2}=\wh{\wh{\bs1_A*\bs1_{-A}}}=p^3\bs1_A*\bs1_{-A}\]
is a nonnegative function, and
\[\frac{h_A(0)}{\hat{h}_A(0)}=\frac{\abs{A}^2}{p^3\abs{A}}=\frac{\abs{A}}{p^3},\]
proving the trivial fact from linear algebra, that the number of pairwise orthogonal functions on $A$ are at most $\abs{A}$. The last equality holds true also for the functions $h_{\la A}$, $1\leq \la\leq p-1$, as well as for their average,
\[h=\sum_{\la=1}^{p-1}h_{\la A}.\]
The latter is constant on every punctured line, as
\[h(x)=\Tr_{\QQ(\ze_p)/\QQ}\abs{\wh{\bs1}_A(x)}^2, \;\;\; \forall x\in \ZZ_p^3.\]
Moreover, $h$ takes only integer values. A function that is constant on every punctured line in $\ZZ_p^3$ will be called \emph{balanced} (equivalently, it is a homogeneous function of degree 0). The following lemma shows us that it suffices to restrict our attention to balanced functions, in search for a witness function with desirable conditions, since the ``forbidden" set $\supp(\wh{\bs1}_A)$ is a union of lines through the origin.

\begin{lemma}\label{lem:balanced}
Suppose that $h$ is a witness function with respect to the forbidden set $E\ssq\ZZ_p^3$, which is a union of lines through the origin. Then, there is a balanced witness function $f$, also with respect to $E$, which satisfies $h(0)=f(0)$ and $\hat{h}(0)=\hat{f}(0)$.
\end{lemma}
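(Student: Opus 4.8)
The plan is to average the witness function $h$ over the multiplicative action of $\ZZ_p^* = \ZZ_p \sm \{0\}$ on $\ZZ_p^3$, turning it into a balanced function while preserving the two crucial quantities $h(0)$ and $\hat{h}(0)$. Concretely, for $\la \in \ZZ_p^*$ let $h_\la$ denote the dilate $h_\la(x) = h(\la x)$, and set
\[
f = \frac{1}{p-1}\sum_{\la \in \ZZ_p^*} h_\la.
\]
Since each $h_\la$ is constant on the $\ZZ_p^*$-orbit only after averaging, the sum $f$ is manifestly invariant under $x \mapsto \mu x$ for every $\mu \in \ZZ_p^*$ (the map permutes the summands), so $f$ is constant on every punctured line, i.e. $f$ is balanced. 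Because $E$ is a union of lines through the origin, it is itself invariant under the dilations $x \mapsto \la x$, which is exactly what makes averaging preserve the witness property.

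First I would verify the two defining properties of a witness function for $f$. For property (1): evenness of $f$ follows since each $h_\la$ is even (as $h$ is even and the dilations commute with $x \mapsto -x$), and for $x \notin E$ we have $\la x \notin E$ for all $\la \in \ZZ_p^*$ by the dilation-invariance of $E$, so each summand $h_\la(x) = h(\la x) \leq 0$, giving $f(x) \leq 0$. For property (2): I would compute the Fourier transform of the dilate. A direct change of variables shows $\widehat{h_\la}(\xi) = \hat{h}(\la^{-1}\xi)$ (the dilation action on $\ZZ_p^3$ transports to its inverse on the dual $\wh{\ZZ_p^3}$ under the identification $x \mapsto \xi_x$), hence
\[
\hat{f}(\xi) = \frac{1}{p-1}\sum_{\la \in \ZZ_p^*} \hat{h}(\la^{-1}\xi) \geq 0,
\]
since each term $\hat{h}(\la^{-1}\xi) \geq 0$ by hypothesis. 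Evaluating at $\xi = 0$ gives $\hat{f}(0) = \frac{1}{p-1}\sum_{\la} \hat{h}(0) = \hat{h}(0) > 0$, which is both the positivity of $\hat{f}(0)$ and the required equality $\hat{f}(0) = \hat{h}(0)$.

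It remains to check $f(0) = h(0)$: since $\la \cdot 0 = 0$ for every $\la$, each summand equals $h(0)$, so $f(0) = \frac{1}{p-1}\sum_{\la} h(0) = h(0)$. This completes all the required conditions. I do not expect a genuine obstacle here; the only point demanding mild care is confirming the dilation formula $\widehat{h_\la}(\xi) = \hat{h}(\la^{-1}\xi)$ together with the correct bookkeeping of how dilation on the group corresponds to dilation on the dual under the fixed isomorphism $x \mapsto \xi_x$, and the observation that the resulting permutation of the index set $\ZZ_p^*$ leaves the averaged sum invariant, guaranteeing balancedness. The construction mirrors exactly the passage from $h_A = \abs{\wh{\bs1}_A}^2$ to $h = \sum_{\la=1}^{p-1} h_{\la A}$ already used in the preceding discussion, so it fits the established framework.
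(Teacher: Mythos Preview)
Your proposal is correct and follows essentially the same argument as the paper: both define $f(x)=\frac{1}{p-1}\sum_{\la\in\ZZ_p^*}h(\la x)$, observe that $E$ being a union of lines forces $f\le 0$ off $E$, compute $\hat f(\xi)=\frac{1}{p-1}\sum_{\la}\hat h(\la^{-1}\xi)\ge 0$, and read off $f(0)=h(0)$, $\hat f(0)=\hat h(0)$. Your write-up is in fact a bit more careful than the paper's, since you explicitly verify evenness of $f$ and spell out why the average is balanced.
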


\begin{proof}
Consider the function $f:\ZZ_p^3\to\RR$ which is defined by
\[f(x)=\frac{1}{p-1}\sum_{\la=1}^{p-1}h(\la x), \;\;\; \forall x\in\ZZ_p^3.\]
$f$ is obviously balanced, as $h$ is, and for every $x\notin E$ we have $\la x\notin E$ for $1\leq\la\leq p-1$, as $E$ is a union of lines. Therefore, $f(x)\leq0$, $\forall x\notin E$. We also have $\hat{f}\geq0$, as
\begin{align*}
\hat{f}(y) &=\frac{1}{p-1}\sum_{\la=1}^{p-1}\sum_{x\in\ZZ_p^3}h(\la x)\ze_p^\scal{y,-x}\\
&= \frac{1}{p-1}\sum_{\la=1}^{p-1}\sum_{x\in\ZZ_p^3}h(x)\ze_p^\scal{\la^{-1}y,-x}\\
&=\frac{1}{p-1}\sum_{\la=1}^{p-1}\hat{h}(\la^{-1}y)\geq0.
\end{align*}
Putting $y=0$ above we get $\hat{f}(0)=\hat{h}(0)$, and obviously satisfies $f(0)=h(0)$, completing the proof.
\end{proof}
Fourier transforms of balanced functions are also balanced; indeed,
\begin{equation}\label{eq:fourierbalanced}
\wh{\bs1}_0=\bs1_{\ZZ_p^3}, \;\;\; \wh{\bs1}_L=p\bs1_{L^\perp}, \;\;\; \wh{\bs1}_{L^*}=p\bs1_{L^\perp}-\bs1_{\ZZ_p^3}, 
\end{equation}
where $L$ is a line through the origin.

\bigskip\bigskip
\section{Fourier analysis on the finite projective plane}\label{FourierProjective}
\bigskip\bigskip
Now that we have restricted on balanced functions, we may work with functions on the finite projective plane of order $p$, denoted by $\bs P\FF_p^2$, along with the point $O$, which corresponds to the origin of $\ZZ_p^3$. A balanced function $f:\ZZ_p^3\to\CC$ corresponds to the function $\wt{f}:\bs P\FF_p^2\cup\set{O}\to\CC$, which is defined by
\[\wt{f}([x_1:x_2:x_3])=f(x_1,x_2,x_3), \;\;\; \forall x_1, x_2, x_3\in\ZZ_p.\]
$\wt{f}$ is well-defined since $f$ is balanced, and we note that we extend the notation on projective coordinates, by putting $O=[0:0:0]$. Since the Fourier transforms of balanced functions are also balanced, we define the Fourier transform for functions defined on $\bs P\FF_p^2\cup\set{O}$ so that it satisfies $\wt{\hat{f}}=\wh{\wt{f}}$. Similarly, for a set $D\ssq\ZZ_p^3$, that is a union of lines through the origin, we denote by $\wt{D}$ the subset of $\bs P\FF_p^2\cup\set{O}$ satisfying
\[\wt{D}=\set{[x_1:x_2:x_3]:(x_1,x_2,x_3)\in D}.\]
If $P=[x_1:x_2:x_3]\in\bs P\FF_p^2$, we denote by $P^\perp$ the set
\[P^\perp=\set{[y_1:y_2:y_3]\neq O:x_1y_1+x_2y_2+x_3y_3=0}.\]
The indicator function of a subset $K\ssq\bs P\FF_p^2\cup\set{O}$ will be denoted by $\de_K$, to make it clearer that we work on $\bs P\FF_p^2\cup\set{O}$ and not $\ZZ_p^3$. If $K$ consists of a single point $P$, then we write $\de_P$ instead of $\de_\set{P}$. Lastly, the constant function on $\bs P\FF_p^2\cup\set{O}$ which equals 1, is denoted simply by $\bs1$. The equations \eqref{eq:fourierbalanced} are then transformed to
\begin{equation}\label{eq:Fourierproj}
    \wh{\de}_O=\bs1, \;\;\; \wh{\de}_P=p\de_{P^\perp}+p\de_O-\bs1.
\end{equation}

When we pass from $\ZZ_p^3\sm\set{0}$ to $\bs P\FF_p^2$, punctured lines collapse to points and punctured planes collapse to lines; moreover, coplanar punctured lines correspond to collinear points on the projective plane.

Now consider a subset $A\ssq\ZZ_p^3$ having $pk$ elements, where $1<k<p$. As we have seen from Theorem \ref{thm:charspec}\eqref{empty}, if $A$ is spectral, then $Z(\wh{\bs1}_A)$ intersects every punctured plane, but contains none. Therefore, the set $Z=\wt{Z(\wh{\bs1}_A)}\ssq \bs P\FF_p^2$ intersects every projective line, but contains none.

\begin{defn}
A set $S\ssq \bs P\FF_p^2$ is called a \emph{blocking set} if it intersects every projective line, but contains none. $S$ is called a \emph{minimal} blocking set, if $S\sm\set{P}$ is not a blocking set, for every $P\in S$.
\end{defn}

So the set $Z$ is a blocking set, and so is the complement of every blocking set, by definition. We denote by $S$ the complement of $Z$ in $\bs P\FF_p^2$, or equivalently,
\begin{equation}\label{eq:suppwtnfcn}
S=\wt{\supp(\wh{\bs1}_A)}\sm\set{O}.
\end{equation}
The following bounds on the size of blocking sets were given by \cite{Blokhuis, blockingsets}. 
\begin{thm}\label{thm:blockingsets}
Let $S\ssq\bs P\FF_p^2$ be a blocking set. Then,
\[\frac{3}{2}(p+1)\leq\abs{S}\leq p^2-\frac{1}{2}p-\frac{1}{2}.\]
If $S$ is a minimal blocking set, then
\[\abs{S}<p\sqrt{p}+1.\]
\end{thm}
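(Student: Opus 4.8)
The three stated bounds split into one genuinely hard inequality and two consequences, so the plan is to isolate the lower bound $\abs S\geq\frac32(p+1)$ as the core statement and recover everything else from it. First I would dispatch the general upper bound $\abs S\leq p^2-\frac12 p-\frac12$ by complementation. If $S$ is a blocking set, then so is $S^c=\bs P\FF_p^2\sm S$: no line can lie in $S^c$, for otherwise that line would miss $S$; and every line must meet $S^c$, for otherwise that line would lie inside $S$. Since $\abs{\bs P\FF_p^2}=p^2+p+1$, applying the lower bound to $S^c$ gives $\abs{S^c}\geq\frac32(p+1)$, and therefore $\abs S=p^2+p+1-\abs{S^c}\leq p^2+p+1-\frac32(p+1)=p^2-\frac12 p-\frac12$, exactly as claimed. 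The two halves of the first inequality are thus equivalent, and the whole first assertion rests on the single bound $\abs S\geq\frac32(p+1)$.

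For that lower bound I would use the R\'edei (polynomial) method, which is where the primality of $p$ becomes indispensable. After reducing to a minimal blocking set, I embed $\bs P\FF_p^2$ as the affine plane $\FF_p^2$ together with a line at infinity $\ell_\infty$ chosen to be a \emph{tangent}, so that it meets $S$ in exactly one point. Writing the remaining, affine, points of $S$ as $\set{(a_i,b_i)}$, I form the R\'edei polynomial $R(X,Y)=\prod_i\bra{X+a_iY-b_i}$. The blocking property says precisely that, for every fixed $Y=y_0$, all lines of the corresponding direction are met by $S$, which forces the $p$ intercepts to cover $\FF_p$, so that $X^p-X$ divides $R(X,y_0)$ and $R(X,y_0)$ is fully reducible over $\FF_p$. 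Translating this uniform divisibility into the coefficients of $R$ viewed as a polynomial in $X$ produces a long run of vanishing coefficients, i.e. $R$ is \emph{lacunary}. The decisive input is then the classification of fully reducible lacunary polynomials over a prime field: the gap of such a polynomial cannot be too short, and this gap bound converts directly into $\abs S\geq\frac32(p+1)$. I expect this lacunary-polynomial analysis to be the principal obstacle; it is the content of Blokhuis' theorem and admits no purely combinatorial substitute reaching this constant, since elementary arguments only yield $p+\sqrt p+1$, a value unattainable over a prime field because no Baer subplane exists there.

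Finally, for a minimal blocking set I would establish $\abs S<p\sqrt p+1$ by the classical Bruen--Thas counting. Minimality enters through the observation that every point $P\in S$ lies on a tangent line (a line meeting $S$ only at $P$): if not, every line through $P$ would already meet $S\sm\set P$, so $S\sm\set P$ would remain a blocking set, contradicting minimality. Writing $t_i$ for the number of lines meeting $S$ in exactly $i$ points, the three standard incidence equations $\sum_i t_i=p^2+p+1$, $\sum_i i\,t_i=\abs S(p+1)$, and $\sum_i i(i-1)t_i=\abs S(\abs S-1)$ hold, with $t_0=t_{p+1}=0$ because $S$ blocks every line and contains none. The tangent observation yields $t_1\geq\abs S$, since each of the $\abs S$ points carries at least one tangent and each tangent is counted once at its unique point of $S$. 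I would then combine $t_1\geq\abs S$ with a second-moment (Cauchy--Schwarz) estimate relating $\sum_{i\geq2}t_i$, $\sum_{i\geq2}i\,t_i$, and $\sum_{i\geq2}i(i-1)t_i$ over the secants; balancing the tangent count against these secant moments forces a quadratic inequality in $\abs S$ whose solution is $\abs S\leq p\sqrt p+1$, with strictness because $\sqrt p\notin\ZZ$. The one delicate point is squeezing the exact constant $p\sqrt p+1$ out of the second-moment balance, which is precisely where the square root enters.
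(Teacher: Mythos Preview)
The paper does not prove this theorem at all: it is quoted from the literature with the attribution ``The following bounds on the size of blocking sets were given by \cite{Blokhuis, blockingsets}'', and is then used as a black box in Section~\ref{Proof}. There is consequently no in-paper argument to compare your proposal against.

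That said, your outline is an accurate sketch of the proofs in those cited sources. The complementation step reducing the upper bound to the lower is exactly right (the paper itself notes that the complement of a blocking set is again a blocking set), and the arithmetic $p^2+p+1-\tfrac32(p+1)=p^2-\tfrac12 p-\tfrac12$ is correct. Your description of Blokhuis's argument for the lower bound $\tfrac32(p+1)$ via the R\'edei polynomial and the classification of fully reducible lacunary polynomials over $\FF_p$ matches the original proof, and you correctly flag that primality (absence of Baer subplanes) is what pushes the bound past the generic $q+\sqrt q+1$. For the minimal blocking set bound $\abs S<p\sqrt p+1$, the ingredients you list---minimality forces a tangent through each point, hence $t_1\geq\abs S$, combined with the standard incidence identities and a second-moment inequality on secants---are precisely the Bruen--Thas argument; the strict inequality follows since $p^{3/2}$ is irrational for prime $p$. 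Your assessment that the lacunary-polynomial step is the only genuinely hard part is correct.
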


Our ultimate goal is to prove the \textbf{S-T} direction in $\ZZ_p^3$. If counterexamples exist, they are described by Theorem \ref{thm:charspec}\eqref{empty}. In view of Lemma \ref{lem:balanced}, it suffices to show that there is a balanced witness function $h$ with respect to $\supp(\wh{\bs1}_A)$, such that
\[\frac{h(0)}{\hat{h}(0)}<\frac{k}{p^2},\]
where $\abs{A}=pk$, and $Z(\wh{\bs1}_A)$ intersects every punctured plane, but contains none. Equivalently, it suffices to find a function $\wt{h}:\bs P\FF_p^2\cup\set{O}\to\RR$, such that $\wt{h}(P)\leq0$, $\forall P\in Z$, and $\wh{\wt{h}}\geq0$, such that
\[\frac{\wt{h}(O)}{\wh{\wt{h}}(O)}<\frac{k}{p^2}.\]
We will actually restrict to functions $\wt{h}$ that vanish on $Z$, so that $\wt{h}\geq0$ as well. Such functions (i.~e. functions $f$ such that $f, \hat{f}\geq0$) are called \emph{positive definite}.

\bigskip\bigskip
\section{Proof of Theorem \ref{mainthm}}\label{Proof}
\bigskip\bigskip

Let $S$ be the set defined as in \eqref{eq:suppwtnfcn}, and let $S'\ssq S$ be a minimal blocking set\footnote{One could construct such a set by removing points from S. For example, we remove $P$ from $S$, if every line through $P$ contains also another point of $S$. The process continues until no such $P$ exists.}. We define
\[\wt{h}=\de_{S'}+(\abs{S'}-p)\de_O.\]
$\wt{h}$ obviously vanishes on $Z$, so to show that it is indeed a witness function with respect to $S$, we have to show that $\wt{h}$ is positive definite. Since $\wt{h}\geq0$, we only have to compute the Fourier transform using \eqref{eq:Fourierproj},
\[\wh{\wt{h}}=\sum_{P\in S'}p\de_{P^\perp}+p\abs{S'}\de_O-\abs{S'}\bs1+\abs{S'}\bs1-p\bs1=p\bra{\sum_{P\in S'}\de_{P^\perp}+\abs{S'}\de_O-\bs1}.\]
The fact that $\wh{\wt{h}}\geq0$ follows from the fact that $S'$ is a blocking set; indeed, as for an arbitrary $Q\in\bs P\FF_p^2$ it holds
\[\wh{\wt{h}}(Q)=p\bra{\sum_{P\in S'}\de_{P^\perp}(Q)-1}=p\bra{\sum_{P\in S'}\de_{Q^\perp}(P)-1}=p(\abs{S'\cap Q^\perp}-1)\geq0.\]
Finally, $\wt{h}(O)=\abs{S'}-p>0$, $\wh{\wt{h}}(O)=p(\abs{S'}-1)>0$, and
\[\frac{\wt{h}(O)}{\wh{\wt{h}}(O)}=\frac{\abs{S'}-p}{p(\abs{S'}-1)}.\]
When is the latter less than $k/p^2$? Putting $k=p-m$, this is equivalent to
\[\abs{S'}-1<\frac{p(p-1)}{m},\]
which is satisfied if
\[m<\frac{p-1}{\sqrt{p}},\]
as
\[\frac{p-1}{\sqrt{p}}=\frac{p(p-1)}{p\sqrt{p}}<\frac{p(p-1)}{\abs{S'}-1},\]
by virtue of Theorem \ref{thm:blockingsets}. This shows that if
\[p^2-p\sqrt{p}+\sqrt{p}<p(p-m)=\abs{A}<p^2,\]
then $A$ cannot be spectral, completing the proof of Theorem \ref{mainthm}.

\bigskip\bigskip
\section{Conclusion}\label{Future}
\bigskip\bigskip
Delsarte's method provided a partial result for Fuglede's conjecture in $\ZZ_p^3$. This can be improved further, using Lemma 3 from \cite{FMV19}; translated to the terminology of this paper, this shows that either a spectral set $A\ssq\ZZ_p^3$ with $\abs{A}=pk$, $1<k<p$ (if it exists!), is contained in $k$ parallel planes, or $\wt{h}$ is supported on at least $3$ points on every projective line. In the latter case, the set $S$ is a $3$-fold blocking set, and if $S'\ssq S$ is a minimal $3$-fold blocking set, we define
\[\wt{h}=\de_{S'}+(\abs{S'}-3p)\de_O.\]
Working exactly as in Section \ref{Proof}, we obtain that $A$ cannot be spectral if
\begin{equation}\label{eq:improved}
p\bra{p-\frac{3(p-1)}{\sqrt{3p-5}+1}}<\abs{A}<p^2,
\end{equation}
using the upper bound on the size of a $3$-fold blocking set in $\bs P\FF_p^2$, given by \cite{minimalblockingsets}
\[\frac{1}{2}p\sqrt{12p-20}+p.\]
This method has limitations; even if $S$ were a $t$-fold blocking set, and $S'\ssq S$ a minimal subset with the same property, then the witness function
\begin{equation}\label{eq:newwitness}
\wt{h}=\de_{S'}+(\abs{S'}-tp)\de_O
\end{equation}
would improve \eqref{eq:improved} using \cite[Theorem 1.1]{minimalblockingsets}, but not to the point where sets of cardinality $\abs{A}=2p$ are included, since $t\leq p-2$. We note that we cannot have $t\geq p-1$, since that would imply that no three points of $Z$ are collinear; if that were true, then considering all lines through a single point of $Z$ would give the upper bound $\abs{Z}\leq p+2$, which violates Theorem \ref{thm:blockingsets}, since $Z$ is also a blocking set.

If we are in the former case, where $S$ is not a $3$-fold blocking set, then the best we can do with this approach is to examine whether $S$ contains minimal blocking sets of small cardinality. However, even the lower bound described by Theorem \ref{thm:blockingsets}, would improve Theorem \ref{mainthm} to
\[p\cdot\frac{p^2+5p}{3p+1}<\abs{A}<p^2,\]
at best case scenario, which does not include $\abs{A}=kp$ for roughly $p/3$ values of $k$.

So, how can we approach uniformly the cases where $\abs{A}=kp$, with $k$ small? The limitations of the method described so far is that we consider only witness functions of the form \eqref{eq:newwitness}, which seems to tackle the cases where $\abs{A}$ is large. A cleverer selection of a witness function supported on $S\cup\set{O}$, perhaps in a probabilistic fashion, might give the answer to this question.

 \bigskip\bigskip

\bibliographystyle{amsplain}
\bibliography{mybib}

\end{document}